\newtheorem{theorem}{Theorem}
\newtheorem{lemma}{Lemma}
\newtheorem*{maincorollary}{Corollary}
\newcommand{\tn}[1]{\textnormal{#1}}
\newcommand{\pa}[1]{\left(#1\right)}
\newcommand{\br}[1]{\left[#1\right]}
\newcommand{\cbr}[1]{\left\{#1\right\}}
\newcommand{\fg}[1]{\pi_{1}\pa{#1}}
\newcommand{\tfg}[1]{\pi_{1}^{\tn{top}}\pa{#1}}
\begin{document}
\title{Discreteness and Homogeneity of the Topological Fundamental Group}
\author[J. Calcut]{Jack S. Calcut}
\address{Department of Mathematics\\
					Michigan State University\\
					East Lansing, MI 48824-1027}
\email{jack@math.msu.edu}
\urladdr{http://www.math.msu.edu/~jack/}

\author[J. McCarthy]{John D. McCarthy}
\address{Department of Mathematics\\
					Michigan State University\\
					East Lansing, MI 48824-1027}
\email{mccarthy@math.msu.edu}
\urladdr{http://www.math.msu.edu/~mccarthy/}
       
\keywords{}
\subjclass[2000]{}
\date{March 25, 2009}   

\begin{abstract}
For a locally path connected topological space, the topological fundamental group is discrete if and only if the space is semilocally simply-connected. While functoriality of the topological fundamental group for arbitrary topological spaces remains an open question, the topological fundamental group is always a homogeneous space.
\end{abstract}

\maketitle

\section{Introduction}

The concept of a natural topology for the fundamental group appears to have originated with Hurewicz~\cite{wH35} in 1935. It received further attention by Dugundji~\cite{jD50} in 1950 and by Biss~\cite{dB02}, Fabel~\cite{pF08,pF07,pF05,pF06}, and others more recently. The purpose of this note is to prove the following folklore theorem.

\begin{theorem}\label{discretetheorem}
Let $X$ be a locally path connected topological space. The topological fundamental group $\tfg{X}$ is discrete if and only if $X$ is semilocally simply-connected.
\end{theorem}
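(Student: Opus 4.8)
The plan is to work directly with the quotient that defines the topology. Recall that $\tfg{X}$ is the set $\fg{X}$ of based homotopy classes of loops at a basepoint $x_{0}$, topologized as the quotient of the based loop space $\Omega\pa{X,x_{0}}$ (with the compact--open topology) under the map $q$ sending a loop to its homotopy class; the fibre $q^{-1}\pa{\br{\gamma}}$ is precisely the based homotopy class of $\gamma$, that is, the path component of $\gamma$ in $\Omega\pa{X,x_{0}}$. A subset of $\tfg{X}$ is open exactly when its $q$-preimage is open, so $\tfg{X}$ is discrete if and only if every based homotopy class is an open subset of $\Omega\pa{X,x_{0}}$. Throughout I would exploit the subbasic sets $\langle K,U\rangle=\cbr{f:f\pa{K}\subseteq U}$, with $K\subseteq\br{0,1}$ compact and $U\subseteq X$ open, that generate the compact--open topology.

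For the implication that discreteness forces semilocal simple connectivity, I would argue locally at $x_{0}$. Discreteness makes $\cbr{\br{c}}$ open, where $c$ is the constant loop, so its preimage (the set of null-homotopic loops) is an open neighborhood of $c$ in $\Omega\pa{X,x_{0}}$. Hence some basic set $\bigcap_{j=1}^{m}\langle K_{j},U_{j}\rangle$ contains $c$ and consists of null-homotopic loops; since $c\pa{K_{j}}=\cbr{x_{0}}$ we have $x_{0}\in U_{j}$ for every $j$. By local path connectedness, choose a path connected open $U$ with $x_{0}\in U\subseteq\bigcap_{j}U_{j}$. Any loop $\alpha$ at $x_{0}$ with image in $U$ then lies in every $\langle K_{j},U_{j}\rangle$, hence is null-homotopic in $X$; thus $\fg{U}\to\fg{X}$ is trivial and $X$ is semilocally simply-connected at $x_{0}$. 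Since changing the basepoint along a path is a homeomorphism of the topological fundamental group (equivalently, by the homogeneity of $\tfg{X}$), the same local argument applies at every point.

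For the converse, that semilocal simple connectivity yields discreteness, I would show each based homotopy class is open. Fix a loop $\delta$. By hypothesis every point has a neighborhood in which loops die in $X$, and by local path connectedness these may be taken path connected and open; a Lebesgue number for the pullback cover of $\br{0,1}$ gives a partition $0=t_{0}<\dots<t_{n}=1$ with $\delta\pa{\br{t_{i-1},t_{i}}}\subseteq U_{i}$ for such a ``good'' set $U_{i}$. I would then pick path connected open $V_{i}\ni\delta\pa{t_{i}}$ with $V_{i}\subseteq U_{i}\cap U_{i+1}$ and form the open neighborhood $N=\bigcap_{i}\langle\br{t_{i-1},t_{i}},U_{i}\rangle\cap\bigcap_{i}\langle\cbr{t_{i}},V_{i}\rangle$ of $\delta$ in $\Omega\pa{X,x_{0}}$. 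The crux is to show every $f\in N$ is based homotopic to $\delta$: joining $\delta\pa{t_{i}}$ to $f\pa{t_{i}}$ by a path $\sigma_{i}$ inside $V_{i}$ (taking $\sigma_{0},\sigma_{n}$ constant at $x_{0}$), each loop $\delta_{i}*\sigma_{i}*\overline{f_{i}}*\overline{\sigma_{i-1}}$ lies in the good set $U_{i}$ and is therefore null-homotopic in $X$, giving $\delta_{i}*\sigma_{i}\simeq\sigma_{i-1}*f_{i}$; telescoping these relations over $i$ collapses the $\sigma_{i}$ and yields $\delta\simeq f$ rel $x_{0}$. Thus $N$ lies in the class of $\delta$, every class is open, and $\tfg{X}$ is discrete.

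The main obstacle is this last homotopy in the converse: the bookkeeping needed to compare $f$ and $\delta$ when their breakpoints $f\pa{t_{i}}$ and $\delta\pa{t_{i}}$ differ. The two hypotheses enter in tandem precisely here---local path connectedness supplies the connecting paths $\sigma_{i}$ inside the overlaps, and semilocal simple connectivity kills the resulting correction loops in $X$---and choosing the neighborhood $N$ fine enough to control both the pieces and the breakpoints is what makes the telescoping valid. The remaining steps, namely the other direction and the reduction to a single basepoint, are routine by comparison.
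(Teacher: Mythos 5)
Your proposal follows essentially the same route as the paper: your forward direction is the paper's Lemma~1 (pull the open singleton $\cbr{\br{c}}$ back through the quotient map to a basic compact-open neighborhood $\bigcap_{j}\langle K_{j},U_{j}\rangle$ of the constant loop; any loop with image in $\bigcap_{j}U_{j}$ then lies in that basic set, hence is null-homotopic), and your converse is the paper's Lemma~2 (Lebesgue subdivision, connecting paths inside path connected subsets of the overlaps, correction loops killed in $X$, telescoping). The cores of both arguments are correct. Two remarks, one substantive.

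The substantive one: your justification for passing from the basepoint $x_{0}$ to all other points in the forward direction---``changing the basepoint along a path is a homeomorphism\dots equivalently, by the homogeneity of $\tfg{X}$''---is wrong on both counts, though fortunately dispensable. Homogeneity concerns $\tfg{X,x}$ acting on itself at a \emph{fixed} basepoint and says nothing about comparing groups at different basepoints. And basepoint change along a path only reaches the path component of $x_{0}$: the theorem assumes $X$ is locally path connected, not path connected. Indeed, if discreteness were assumed only at a single basepoint the statement would be false: take $X$ to be the disjoint union of the Hawaiian earring and an isolated point $p$; then $\tfg{X,p}$ is trivial, hence discrete, yet $X$ is not semilocally simply-connected. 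Your proof survives because the paper defines ``$\tfg{X}$ is discrete'' to mean that $\tfg{X,x}$ is discrete for \emph{every} $x\in X$, so no propagation is needed at all---simply run your local argument at each $x$ separately. (This is exactly why the paper's Corollary, which assumes discreteness at one basepoint, adds the hypothesis that $X$ be path connected.) The lesser remark: in the converse, the correction loop $\delta_{i}\ast\sigma_{i}\ast\overline{f_{i}}\ast\overline{\sigma_{i-1}}$ is based at $\delta\pa{t_{i-1}}$, which is generally not the point at which $U_{i}$ was chosen to be relatively inessential; you need that a path connected relatively inessential neighborhood kills loops based at \emph{all} of its points (the paper's ``strongly relatively inessential'' observation). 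Since your $U_{i}$ are path connected this is a one-line conjugation argument, but it should be said.
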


Theorem~5.1 of~\cite{dB02} is Theorem~\ref{discretetheorem} without the hypothesis of local path connectedness. However a counterexample of Fabel~\cite{pF07} shows that this stronger result is false. Fabel~\cite{pF07} also proves a weaker version of Theorem~\ref{discretetheorem} assuming that $X$ is locally path connected and a metric space. In this note we remove the metric hypothesis.\\

Our proof proceeds from first topological principles making no use of rigid covering fibrations~\cite{dB02} nor even of classical covering spaces. We make no use of the functoriality of the topological fundamental group, a property which was also a main result in~\cite[Cor.~3.4]{dB02} but in fact is unproven~\cite[pp.~188--189]{pF06}. Beware that the misstep in the proof of~\cite[Prop.~3.1]{dB02}, namely the assumption that the product of quotient maps is a quotient map, is repeated in~\cite[Thm.~2.1]{hG08}.\\

In general the homeomorphism type of the topological fundamental group depends on a choice of basepoint. We say that $\tfg{X}$ is \emph{discrete} without reference to basepoint provided $\tfg{X,x}$ is discrete for each $x\in X$. If $x$ and $y$ are connected by a path in $X$, then $\tfg{X,x}$ and $\tfg{X,y}$ are homeomorphic. This fact was proved in~\cite[Prop.~3.2]{dB02} and a detailed proof is in Section~\ref{basepoint} below for completeness. Theorem~\ref{discretetheorem} now immediately implies the following.

\begin{maincorollary}
Let $X$ be a path connected and locally path connected topological space. The topological fundamental group $\tfg{X,x}$ is discrete for some $x\in X$ if and only if $X$ is semilocally simply-connected.
\end{maincorollary}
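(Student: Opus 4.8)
The plan is to deduce the corollary directly from Theorem~\ref{discretetheorem} together with the basepoint-independence fact recalled above. The only gap between the two statements is the quantifier over basepoints: Theorem~\ref{discretetheorem} asserts that $\tfg{X}$ is discrete, which by the convention fixed above means $\tfg{X,x}$ is discrete for \emph{every} $x\in X$, whereas the corollary hypothesizes discreteness only for \emph{some} $x\in X$. I would close this gap using path connectedness of $X$.

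First I would observe that since $X$ is path connected, any two points $x,y\in X$ are joined by a path, and hence by the cited result that $\tfg{X,x}$ and $\tfg{X,y}$ are homeomorphic whenever $x$ and $y$ lie in a common path component, all of the groups $\tfg{X,x}$ are homeomorphic to one another. Because discreteness is a topological property preserved under homeomorphism, $\tfg{X,x}$ is discrete for some $x\in X$ if and only if it is discrete for every $x\in X$; that is, if and only if $\tfg{X}$ is discrete in the sense of Theorem~\ref{discretetheorem}. Then I would invoke Theorem~\ref{discretetheorem} itself: as $X$ is locally path connected, $\tfg{X}$ is discrete if and only if $X$ is semilocally simply-connected. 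Chaining these two equivalences yields the corollary.

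I do not expect a genuine obstacle here, since all of the substantive content is carried by Theorem~\ref{discretetheorem} and by the homeomorphism result of Section~\ref{basepoint}; the corollary is essentially a quantifier reduction made possible by path connectedness. The only point deserving (minor) care is to apply the homeomorphism result in both directions and to note that its hypothesis is met precisely because path connectedness supplies a path between any prescribed pair of basepoints.
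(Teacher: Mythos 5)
Your proposal is correct and matches the paper's own argument: the paper likewise deduces the corollary immediately from Theorem~\ref{discretetheorem} together with the basepoint-change lemma of Section~\ref{basepoint}, using path connectedness to pass between ``discrete for some $x$'' and ``discrete for every $x$.'' No gaps; your note about applying the homeomorphism result in both directions is exactly the (routine) care the paper's one-line deduction implicitly takes.
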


As mentioned above it is open whether $\pi_{1}^{\tn{top}}$ is a functor from the category of pointed topological spaces to the category of topological groups. The unsettled question is whether multiplication
\[
\xymatrix@R=0pt{
	\tfg{X,x}\times\tfg{X,x}	\ar[r]^-{\mu}		&		\tfg{X,x}\\
	\pa{\br{f},\br{g}}				\ar@{|-{>}}[r]	&		\br{f}\cdot\br{g}}
\]
is continuous. By Theorem~\ref{discretetheorem}, if $X$ is locally path connected and semilocally simply-connected, then $\tfg{X,x}$, and hence $\tfg{X,x}\times\tfg{X,x}$, is discrete and so $\mu$ is trivially continuous. Continuity of $\mu$ in general remains an interesting question.\\

Lemma~\ref{leftrightlemma} below shows that if $(X,x)$ is an arbitrary pointed topological space, then left and right multiplication by any fixed element in $\tfg{X,x}$ are continuous self maps of $\tfg{X,x}$. Therefore $\tfg{X,x}$ acts on itself by left and right translation as a group of self homeomorphisms. Clearly these actions are both transitive. Thus we obtain the following result.

\begin{theorem}
If $(X,x)$ is a pointed topological space, then $\tfg{X,x}$ is a homogeneous space.
\end{theorem}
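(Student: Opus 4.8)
The plan is to show that the self homeomorphism group of $\tfg{X,x}$ acts transitively on $\tfg{X,x}$, which is precisely the assertion that $\tfg{X,x}$ is homogeneous. For each fixed $\br{h}\in\tfg{X,x}$, left translation $L_{\br{h}}\pa{\br{f}}=\br{h}\cdot\br{f}$ is continuous by Lemma~\ref{leftrightlemma}. First I would verify that $L_{\br{h}}$ is in fact a homeomorphism: as a group operation it is a bijection with set-theoretic inverse $L_{\br{h}^{-1}}$, and applying Lemma~\ref{leftrightlemma} a second time, now to the fixed element $\br{h}^{-1}$, shows that this inverse is also continuous. Hence each left translation is a self homeomorphism of $\tfg{X,x}$.

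Given any pair $\br{f},\br{g}\in\tfg{X,x}$, I would then set $\br{h}=\br{g}\cdot\br{f}^{-1}$ and compute $L_{\br{h}}\pa{\br{f}}=\br{g}\cdot\br{f}^{-1}\cdot\br{f}=\br{g}$. Thus the left translations act transitively on $\tfg{X,x}$ by self homeomorphisms, so $\tfg{X,x}$ is homogeneous. (Right translations would serve equally well, by the symmetric half of Lemma~\ref{leftrightlemma}.)

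I expect no serious obstacle here, as Lemma~\ref{leftrightlemma} supplies all the continuity that is needed. The one point requiring care is that homogeneity demands each $L_{\br{h}}$ be a homeomorphism rather than merely continuous, which is why the lemma must be invoked twice: once for $\br{h}$ to obtain continuity of $L_{\br{h}}$, and once for $\br{h}^{-1}$ to obtain continuity of its inverse. It bears emphasizing that this argument never uses joint continuity of the multiplication $\mu$ — the open question discussed above — but only the continuity of each individual one sided translation. Consequently $\tfg{X,x}$ is homogeneous even in those cases where it may fail to be a topological group.
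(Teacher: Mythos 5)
Your proposal is correct and follows essentially the same route as the paper: the paper likewise deduces homogeneity directly from Lemma~\ref{leftrightlemma}, noting that $\tfg{X,x}$ acts on itself by translations as a group of self homeomorphisms and that this action is transitive. Your explicit verification that $L_{\br{h}}^{-1}=L_{\br{h}^{-1}}$ is continuous is exactly what the paper's proof of Lemma~\ref{leftrightlemma} does, and your observation that no joint continuity of $\mu$ is needed matches the paper's discussion.
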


This note is organized as follows. Section~\ref{defs} contains definitions and conventions, Section~\ref{proof} proves two lemmas and Theorem~\ref{discretetheorem}, Section~\ref{basepoint} addresses change of basepoint, and Section~\ref{translation} shows left and right translation are homeomorphisms.

\section{Definitions and Conventions}\label{defs}

By convention, neighborhoods are open. Unless stated otherwise, homomorphisms are inclusion induced.\\

Let $X$ be a topological space and $x\in X$. A neighborhood $U$ of $x$ is \emph{relatively inessential} (in $X$) provided $\fg{U,x}\rightarrow\fg{X,x}$ is trivial. $X$ is \emph{semilocally simply-connected} at $x$ provided there exists a relatively inessential neighborhood $U$ of $x$. $X$ is \emph{semilocally simply-connected} provided it is so at each $x\in X$. A neighborhood $U$ of $x$ is \emph{strongly relatively inessential} (in $X$) provided $\fg{U,y}\rightarrow\fg{X,y}$ is trivial for every $y\in U$.\\

The fundamental group is a functor from the category of pointed topological spaces to the category of groups. Consequently if $A$ and $B$ are any subsets of $X$ such that $x\in A\subset B\subset X$ and $\fg{B,x}\rightarrow\fg{X,x}$ is trivial, then $\fg{A,x}\rightarrow\fg{X,x}$ is trivial as well. This observation justifies the convention that neighborhoods are open.\\

If $X$ is locally path connected and semilocally simply-connected, then each $x\in X$ has a path connected relatively inessential neighborhood $U$. Such a $U$ is necessarily a strongly relatively inessential neighborhood of $x$ as the reader may verify (see for instance~\cite[Ex.~5 p.~330]{jM75}).\\

Let $\pa{X,x}$ be a pointed topological space and let $I=\br{0,1}\subset\mathbb{R}$. The space
\[
	C_{x}(X)=\cbr{f:\pa{I,\partial I}\rightarrow\pa{X,x}\mid f  \text{ is continuous}}
\]
is endowed with the compact-open topology. The function
\[
\xymatrix@R=0pt{
	C_{x}(X)	\ar[r]^-{q}				&		\fg{X,x}\\
	f					\ar@{|-{>}}[r]		&		\br{f}}
\]
is surjective so $\fg{X,x}$ inherits the quotient topology and one writes $\tfg{X,x}$ for the resulting \emph{topological fundamental group}. Let $e_{x}\in C_{x}(X)$ denote the constant map. If $f\in C_{x}(X)$, then $f^{-1}$ denotes the path defined by $f^{-1}(t)=f(1-t)$.\\

\section{Proof of Theorem~\ref{discretetheorem}}\label{proof}

We prove two lemmas and then Theorem~\ref{discretetheorem}.

\begin{lemma}\label{lem1}
Let $\pa{X,x}$ be a pointed topological space. If $\cbr{\br{e_{x}}}$ is open in $\tfg{X,x}$, then $x$ has a relatively inessential neighborhood in $X$.
\end{lemma}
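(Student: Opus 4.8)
The plan is to exploit the definition of the quotient topology on $\tfg{X,x}$ together with the explicit description of the compact-open topology on $C_{x}(X)$. Since the topology on $\tfg{X,x}$ is the quotient topology determined by $q$, the hypothesis that $\cbr{\br{e_{x}}}$ is open is equivalent to the statement that its preimage $N=q^{-1}\pa{\cbr{\br{e_{x}}}}$ is open in $C_{x}(X)$. Note that $N$ is precisely the set of loops at $x$ that are null-homotopic rel $\partial I$ in $X$, and that the constant loop $e_{x}$ lies in $N$. The goal is therefore to convert the openness of $N$ at the point $e_{x}$ into an honest neighborhood $U$ of $x$ in $X$ all of whose loops lie in $N$.

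First I would recall that the compact-open topology on $C_{x}(X)$ has a subbasis consisting of the sets $M\pa{K,V}=\cbr{f\in C_{x}(X)\mid f(K)\subseteq V}$, where $K\subseteq I$ is compact and $V\subseteq X$ is open. Because $N$ is open and contains $e_{x}$, there is a basic neighborhood $W=\bigcap_{i=1}^{n}M\pa{K_{i},U_{i}}$ with $e_{x}\in W\subseteq N$. Since $e_{x}$ is the constant map at $x$, each $U_{i}$ with $K_{i}\neq\emptyset$ must contain $x$. I would then set $U=\bigcap\cbr{U_{i}\mid K_{i}\neq\emptyset}$, a finite intersection of neighborhoods of $x$ and hence itself a neighborhood of $x$. (If every $K_{i}$ is empty, then $W=C_{x}(X)\subseteq N$, so $\fg{X,x}$ is already trivial and $U=X$ suffices.)

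To finish, I would verify that $U$ is relatively inessential. Let $\gamma$ be any loop at $x$ with image contained in $U$. For each index $i$ with $K_{i}\neq\emptyset$ we have $\gamma(K_{i})\subseteq\gamma(I)\subseteq U\subseteq U_{i}$, while for $K_{i}=\emptyset$ the containment $\gamma(K_{i})\subseteq U_{i}$ holds vacuously; thus $\gamma\in M\pa{K_{i},U_{i}}$ for every $i$, so $\gamma\in W\subseteq N$ and $\gamma$ is null-homotopic in $X$. Consequently every element in the image of $\fg{U,x}\rightarrow\fg{X,x}$ is trivial, which is exactly the assertion that $U$ is a relatively inessential neighborhood of $x$.

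I do not expect a genuine obstacle here: the content is the unwinding of two topologies. The one point demanding care is the bookkeeping around possibly empty compact sets $K_{i}$ (which impose no condition and are simply omitted from the intersection defining $U$). The conceptual crux, easy to overlook, is that confining the entire image of a loop to the single open set $U$ forces simultaneous membership in all of the constraint sets $M\pa{K_{i},U_{i}}$ at once, which is what lets a neighborhood in the mapping space be traded for a neighborhood in $X$.
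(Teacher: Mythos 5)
Your proof is correct and is essentially the paper's own argument: you pull the open set $\cbr{\br{e_{x}}}$ back through the quotient map $q$ to obtain a basic compact-open neighborhood $\bigcap_{i}V\pa{K_{i},U_{i}}$ of $e_{x}$ inside $\br{e_{x}}$, take $U$ to be the intersection of the sets $U_{i}$, and observe that any loop confined to $U$ lies in every subbasic constraint simultaneously, hence in $\br{e_{x}}$. Your explicit bookkeeping for empty $K_{i}$ (dropping them from the intersection, and the degenerate case where all are empty) is a minor refinement that the paper leaves implicit.
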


\begin{proof}
The quotient map $q$ is continuous and $\cbr{\br{e_{x}}}\subset\tfg{X,x}$ is open, so $q^{-1}\pa{\br{e_{x}}}=\br{e_{x}}$ is open in $C_{x}\pa{X}$. Therefore $e_{x}$ has a basic open neighborhood
\begin{equation}\label{defV}
	e_{x}\in	V=\bigcap_{n=1}^{N}V\pa{K_{n},U_{n}}	\subset	\br{e_{x}}	\subset C_{x}\pa{X}
\end{equation}
where each $K_{n}\subset I$ is compact, each $U_{n}\subset X$ is open, and each $V\pa{K_{n},U_{n}}$ is a subbasic open set for the compact-open topology on $C_{x}\pa{X}$. We will show that
\[
	U=\bigcap_{n=1}^{N}U_{n}
\]
is a relatively inessential neighborhood of $x$ in $X$. Clearly $U$ is open in $X$ and, by~\eqref{defV}, $x\in U$. Finally, let $f:\pa{I,\partial I}\rightarrow\pa{U,x}$. For each $1\leq n\leq N$ we have
\[
	f\pa{K_{n}}	\subset	U	\subset	U_{n}.
\]
Thus $f\in\br{e_{x}}$ by~\eqref{defV} and so $\br{f}=\br{e_{x}}$ is trivial in $\fg{X,x}$.
\end{proof}

\begin{lemma}\label{lem2}
Let $\pa{X,x}$ be a pointed topological space and let $f\in C_{x}(X)$. If $X$ is locally path connected and semilocally simply-connected, then $\cbr{\br{f}}$ is open in $\tfg{X,x}$.
\end{lemma}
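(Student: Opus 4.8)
The plan is to show that $q^{-1}\pa{\cbr{\br{f}}}$, the set of all loops in $C_{x}\pa{X}$ that are homotopic rel $\partial I$ to $f$, is open; since $q$ is a quotient map this is exactly the assertion that $\cbr{\br{f}}$ is open in $\tfg{X,x}$. It suffices to produce, for the given $f$, a basic compact-open neighborhood $V$ of $f$ such that every $g\in V$ satisfies $\br{g}=\br{f}$. Indeed, because the lemma is stated for an arbitrary loop, the same construction applied to any $g$ with $\br{g}=\br{f}$ yields an open set $V_{g}\subset q^{-1}\pa{\cbr{\br{f}}}$ containing $g$, exhibiting $q^{-1}\pa{\cbr{\br{f}}}$ as a union of open sets.

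First I would use the hypotheses to build a good cover along $f$. Since $X$ is locally path connected and semilocally simply-connected, every point has a path connected relatively inessential, hence strongly relatively inessential, neighborhood. Pulling these back along the continuous map $f$ and applying the Lebesgue number lemma to the compact interval $I$, I would obtain a subdivision $0=t_{0}<t_{1}<\cdots<t_{m}=1$ and strongly relatively inessential path connected neighborhoods $W_{i}$ with $f\pa{K_{i}}\subset W_{i}$, where $K_{i}=\br{t_{i-1},t_{i}}$. Taking $V=\bigcap_{i=1}^{m}V\pa{K_{i},W_{i}}$ as a first approximation forces $g\pa{K_{i}}\subset W_{i}$ for every $g\in V$, so that $f$ and $g$ traverse the same neighborhoods segment by segment.

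To compare $f$ and $g$ I would choose, at each interior subdivision point $t_{i}$, a path $\alpha_{i}$ from $f\pa{t_{i}}$ to $g\pa{t_{i}}$, with $\alpha_{0}$ and $\alpha_{m}$ taken constant at $x$. For each segment the loop $f|_{K_{i}}\cdot\alpha_{i}\cdot\pa{g|_{K_{i}}}^{-1}\cdot\alpha_{i-1}^{-1}$ should lie in the strongly relatively inessential set $W_{i}$, hence be inessential in $X$, giving $f|_{K_{i}}\simeq\alpha_{i-1}\cdot\pa{g|_{K_{i}}}\cdot\alpha_{i}^{-1}$ rel endpoints. Concatenating these homotopies, cancelling each $\alpha_{i}^{-1}\cdot\alpha_{i}$, and using that $\alpha_{0},\alpha_{m}$ are constant then yields $f\simeq g$ rel $\partial I$, so $\br{g}=\br{f}$.

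The main obstacle is precisely the choice of the connecting paths $\alpha_{i}$: each $\alpha_{i}$ is shared by the segments on both sides of $t_{i}$, so it must lie in $W_{i}\cap W_{i+1}$, yet there is no reason a priori for $f\pa{t_{i}}$ and $g\pa{t_{i}}$ to lie in the same path component of this intersection. This is exactly where local path connectedness is indispensable, and where the statement fails without it. I would resolve it by enlarging $V$: since $X$ is locally path connected, the path component $P_{i}$ of $f\pa{t_{i}}$ in the open set $W_{i}\cap W_{i+1}$ is itself open, so $V\pa{\cbr{t_{i}},P_{i}}$ is a legitimate subbasic set containing $f$. Intersecting the sets $V\pa{\cbr{t_{i}},P_{i}}$ for $1\leq i\leq m-1$ with the $V\pa{K_{i},W_{i}}$ guarantees $g\pa{t_{i}}\in P_{i}$ for every $g\in V$, so each $\alpha_{i}$ may be chosen inside $W_{i}\cap W_{i+1}$ and the segmentwise homotopies go through. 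This establishes $\br{g}=\br{f}$ for all $g$ in the enlarged neighborhood $V$ and completes the proof.
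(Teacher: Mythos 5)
Your proposal is correct and takes essentially the same approach as the paper's proof: a Lebesgue-number subdivision into segments lying in path connected, strongly relatively inessential neighborhoods, extra subbasic sets at the subdivision points forcing values into a common (open, by local path connectedness) path component of the overlaps (your $P_{i}$ are the paper's $W_{n}$), and segmentwise comparison loops killed by strong relative inessentiality and then concatenated. The only differences are notational, plus that you anchor the construction at $f$ and observe it applies uniformly over all of $\br{f}$, while the paper works directly with an arbitrary $g\in\br{f}$ and builds the neighborhood around $g$.
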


\begin{proof}
As $q$ is a quotient map, we must show that $q^{-1}\pa{\br{f}}=\br{f}$ is open in $C_{x}(X)$. So let $g\in\br{f}$. For each $t\in I$ let $U_{t}$ be a path connected relatively inessential neighborhood of $g(t)$ in $X$. The sets $g^{-1}\pa{U_{t}}$, $t\in I$, form an open cover of $I$. Let $\lambda>0$ be a Lebesgue number for this cover. Choose $N\in\mathbb{N}$ so that $1/N<\lambda$. For each $1\leq n\leq N$ let
\[
	I_{n}=\br{\frac{n-1}{N},\frac{n}{N}}\subset	I.
\]
Reindex the $U_{t}$s so that
\[
	g\pa{I_{n}}	\subset U_{n}	\text{ for each $1\leq n\leq N$}.
\]
The $U_{n}$s are not necessarily distinct, nor does the proof require this condition. For each $1\leq n\leq N$ let $W_{n}$ denote the path component of $U_{n}\cap U_{n+1}$ containing $g\pa{n/N}$, so
\begin{equation}\label{gnWn}
	g\pa{\frac{n}{N}}\in W_{n}	\subset \pa{U_{n}\cap U_{n+1}}	\subset	X.
\end{equation}

Consider the basic open set
\begin{equation}\label{defnewV}
	V=\pa{\bigcap_{n=1}^{N}V\pa{I_{n},U_{n}}}	\cap	\pa{\bigcap_{n=1}^{N-1}V\pa{\cbr{\frac{n}{N}},W_{n}}}	\subset C_{x}(X).
\end{equation}
By construction, $g\in V$. It remains to show that $V\subset\br{f}$. So let $h\in V$. As $\br{g}=\br{f}$, it suffices to show that $\br{h}=\br{g}$.\\

By~\eqref{defnewV} we have
\begin{align}
	h\pa{I_{n}}				&\subset	U_{n}	\quad	\text{for each } 1\leq n\leq N \text{ and}\notag\\
	h\pa{\frac{n}{N}}	&\in			W_{n}	\quad	\text{for each } 1\leq n\leq N-1.\label{hnWn}
\end{align}
For each $1\leq n\leq N-1$ let $\gamma_{n}:I\rightarrow W_{n}$ be a continuous path such that
\begin{align*}
	\gamma_{n}(0)&=h\pa{\frac{n}{N}}	\quad	\text{and}\\
	\gamma_{n}(1)&=g\pa{\frac{n}{N}},
\end{align*}
which exists by~\eqref{gnWn} and~\eqref{hnWn}. Let $\gamma_{0}=e_{x}$ and $\gamma_{N}=e_{x}$. For each $1\leq n\leq N$ define
\[
\xymatrix@R=0pt{
	I		\ar[r]^-{s_{n}}	&	I_{n}	\\
	t		\ar@{|-{>}}[r]		&	\frac{1}{N}t+\frac{n-1}{N}}
\]
and let
\begin{align*}
	g_{n}&=g\circ s_{n}	\quad \text{and}\\
	h_{n}&=h\circ s_{n}.
\end{align*}
So $g_{n}$ and $h_{n}$ are affine reparameterizations of $\left.g\right|_{I_{n}}$ and $\left.h\right|_{I_{n}}$ respectively.
For each $1\leq n\leq N$
\[
	\delta_{n}=g_{n}\ast\gamma_{n}^{-1}\ast h_{n}^{-1}\ast\gamma_{n-1}
\]
is a loop in $U_{n}$ based at $g_{n}(0)$ (see Figure~\ref{loop}).
\begin{figure}[h!]
	\centerline{\includegraphics{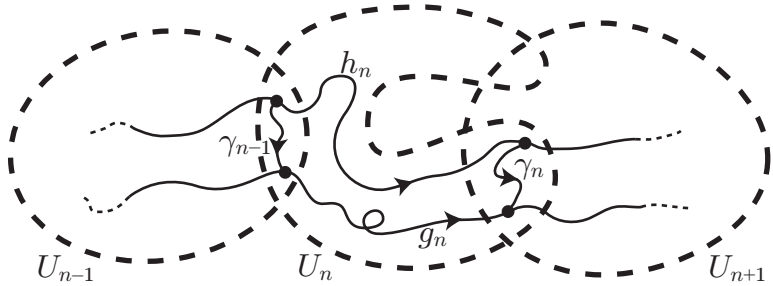}}
	\caption{Loop $\delta_{n}=g_{n}\ast\gamma_{n}^{-1}\ast h_{n}^{-1}\ast\gamma_{n-1}$ in $U_{n}$ based at $g_{n}(0)$.}
	\label{loop}
\end{figure}
As $U_{n}$ is a strongly relatively inessential neighborhood, $\br{\delta_{n}}=1\in\fg{X,g_{n}(0)}$. Therefore $g_{n}$ and $\gamma_{n-1}^{-1}\ast h_{n}\ast \gamma_{n}$ are path homotopic. In $\fg{X,x}$ we have
\begin{align*}
	\br{h}	&=	\br{h_{1}\ast h_{2}\ast\cdots	\ast h_{N}}\\
					&=	\br{\gamma_{0}^{-1}\ast h_{1}\ast\gamma_{1}\ast\gamma_{1}^{-1}\ast h_{2}\ast\gamma_{2}\ast\cdots\ast\gamma_{N-1}^{-1}\ast h_{N}\ast \gamma_{N}}\\
					&=	\br{g_{1}\ast g_{2}\ast\cdots	\ast g_{N}}\\
					&=	\br{g}
\end{align*}
proving the lemma.
\end{proof}

In the previous proof, the second collection of subbasic open sets in~\eqref{defnewV} are essential. Figure~\ref{counter} shows two loops $g$ and $h$ based at $x$ in the annulus $X=S^{1}\times I$.
\begin{figure}[h!]
	\centerline{\includegraphics{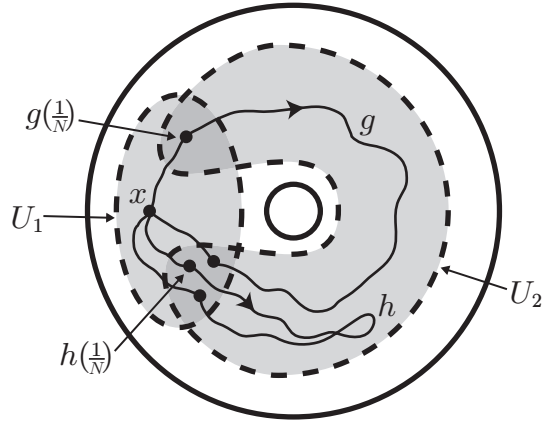}}
	\caption{Loops $g$ and $h$ based at $x$ in the annulus $X$.}
	\label{counter}
\end{figure}
All conditions in the proof are satisfied except $g(1/N)$ and $h(1/N)$ fail to lie in the same connected component of $U_{1}\cap U_{2}$. Clearly $g$ and $h$ are not homotopic loops.

\begin{proof}[Proof of Theorem~\ref{discretetheorem}]
First assume $\tfg{X}$ is discrete and let $x\in X$. By definition $\tfg{X,x}$ is discrete and so $\cbr{\br{e_{x}}}$ is open in $\tfg{X,x}$. By Lemma~\ref{lem1}, $x$ has a relatively inessential neighborhood in $X$. The choice of $x\in X$ was arbitrary and so $X$ is semilocally simply-connected.\\

Next assume $X$ is semilocally simply-connected and let $x\in X$. Points in $\tfg{X,x}$ are open by Lemma~\ref{lem2} and so $\tfg{X,x}$ is discrete. The choice of $x\in X$ was arbitrary and so $\tfg{X}$ is discrete.
\end{proof}

\section{Basepoint change}\label{basepoint}

\begin{lemma}
Let $X$ be a topological space and $x,y\in X$. If $x$ and $y$ lie in the same path component of $X$, then $\tfg{X,x}$ and $\tfg{X,y}$ are homeomorphic.
\end{lemma}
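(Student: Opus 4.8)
The plan is to realize the classical change-of-basepoint isomorphism as a homeomorphism between the two quotient spaces. Since $x$ and $y$ lie in the same path component, fix a path $\alpha$ in $X$ from $x$ to $y$. First I would define a map on the path spaces, $\Phi:C_{x}(X)\rightarrow C_{y}(X)$ by $\Phi(f)=\alpha^{-1}\ast f\ast\alpha$ using the standard concatenation, and record that the induced map on homotopy classes $\beta:\fg{X,x}\rightarrow\fg{X,y}$, $\br{f}\mapsto\br{\alpha^{-1}\ast f\ast\alpha}$, is the classical change-of-basepoint isomorphism of groups, with two-sided inverse $\beta'$ induced by $\Phi'(g)=\alpha\ast g\ast\alpha^{-1}$.

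The key step is to prove that $\Phi$ is continuous for the compact-open topologies. As continuity may be checked on a subbasis, it suffices to show that $\Phi^{-1}\pa{V\pa{K,U}}$ is open for each compact $K\subset I$ and open $U\subset X$. I would split $K$ into its intersections $K_{1}$, $K_{2}$, $K_{3}$ with the three subintervals on which the concatenation $\alpha^{-1}\ast f\ast\alpha$ is, respectively, a reparametrization of $\alpha^{-1}$, of $f$, and of $\alpha$. On $K_{1}$ and $K_{3}$ the value of $\Phi(f)$ is independent of $f$, so the conditions imposed there are either vacuous or force $\Phi^{-1}\pa{V\pa{K,U}}$ to be empty; on $K_{2}$ the condition $\Phi(f)\pa{K_{2}}\subset U$ becomes $f\pa{K_{2}'}\subset U$ for a suitable compact $K_{2}'\subset I$, that is, membership in the subbasic set $V\pa{K_{2}',U}$. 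Hence $\Phi^{-1}\pa{V\pa{K,U}}$ is either empty or a single subbasic open set, and $\Phi$ is continuous.

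With continuity in hand, I would close the argument using the universal property of the quotient maps. Writing $q_{x}$ and $q_{y}$ for the quotient maps onto $\tfg{X,x}$ and $\tfg{X,y}$, the square relating $\Phi$, $\beta$, and these maps commutes because $\br{\alpha^{-1}\ast f\ast\alpha}$ depends only on $\br{f}$; thus $\beta\circ q_{x}=q_{y}\circ\Phi$ is continuous, and since $q_{x}$ is a quotient map, $\beta$ is continuous. The identical argument applied to $\Phi'$ shows that $\beta'$ is continuous, and as $\beta$ and $\beta'$ are mutually inverse it follows that $\beta$ is a homeomorphism, proving the lemma.

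I anticipate that the main obstacle is the compact-open continuity of $\Phi$: the bookkeeping in splitting $K$ across the breakpoints of the concatenation, and in checking that the fixed end segments coming from $\alpha$ contribute only trivial or empty constraints, is where the real work lies. Everything else reduces to the classical change-of-basepoint theory together with the universal property of quotient maps.
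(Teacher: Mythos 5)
Your proposal is correct and follows essentially the same route as the paper: conjugation by the fixed path defines a map of path spaces, its continuity is verified on subbasic sets $V(K,U)$ by splitting $K$ across the concatenation breakpoints and noting the constraints from the path's segments are independent of $f$, and the universal property of quotient maps then descends this to a continuous bijection whose inverse is continuous by the symmetric argument. The only differences are notational (your path runs from $x$ to $y$ rather than $y$ to $x$, and the parenthesization of the concatenation is left implicit), neither of which affects the argument.
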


\begin{proof}
Let $\gamma:I\rightarrow X$ be a continuous path with $\gamma(0)=y$ and $\gamma(1)=x$. Define the function
\[
\xymatrix@R=0pt{
	C_{y}(X)	\ar[r]^-{\Gamma}	&		C_{x}(X)\\
	f					\ar@{|-{>}}[r]		&		\pa{\gamma^{-1}\ast f}\ast\gamma}
\]
First we show that $\Gamma$ is continuous. Let $I_{1}=\br{0,1/4}$, $I_{2}=\br{1/4,1/2}$, and $I_{3}=\br{1/2,1}$. Define the affine homeomorphisms
\[
\xymatrix@R=0pt{
	I_{1}	\ar[r]^-{s_{1}}		&				I			&	I_{2}	\ar[r]^-{s_{2}}		&		I					&	I_{3}	\ar[r]^-{s_{3}}		&		I\\
	t			\ar@{|-{>}}[r]		&				4t		&	t			\ar@{|-{>}}[r]		&		4t-1			&	t			\ar@{|-{>}}[r]		&		2t-1}
\]
and note that
\[
\xymatrix@R=0pt{
	I		\ar[r]^-{\Gamma(f)}	&		X\\
	t		\ar@{|-{>}}[r]			&		\gamma^{-1}	\circ s_{1}(t)	&	0\leq t\leq\frac{1}{4}\\
	t		\ar@{|-{>}}[r]			&		f						\circ s_{2}(t)	&	\frac{1}{4}\leq t\leq\frac{1}{2}\\
	t		\ar@{|-{>}}[r]			&		\gamma			\circ s_{3}(t)	&	\frac{1}{2}\leq t\leq1}
\]
Consider an arbitrary subbasic open set
\[
	V=V\pa{K,U}	\subset C_{x}(X).
\]
Observe that $\Gamma(f)\in V$ if and only if
\begin{align}
	\gamma^{-1}	&\circ s_{1}\pa{K\cap I_{1}}\subset U,	\label{ginvs}\\
	f						&\circ s_{2}\pa{K\cap I_{2}}\subset U,	\text{ and}	\label{fs}\\
	\gamma			&\circ s_{3}\pa{K\cap I_{3}}\subset U.	\label{gs}
\end{align}
Define the subbasic open set
\[
	V'=V\pa{s_{2}\pa{K\cap I_{2}},U}	\subset C_{y}(X).
\]
Observe that $f\in V'$ if and only if~\eqref{fs} holds. As conditions~\eqref{ginvs} and~\eqref{gs} are independent of $f$, either $\Gamma^{-1}(V)=\varnothing$ or $\Gamma^{-1}(V)=V'$. Thus $\Gamma$ is continuous. Next consider the diagram
\[
\xymatrix{
		C_{y}(X)	\ar[r]^-{\Gamma}	\ar[d]_{q_{y}}	&	C_{x}(X)	\ar[d]^{q_{x}}\\
		\tfg{X,y}	\ar@{-->}[r]^-{\pi\pa{\Gamma}}		&	\tfg{X,x}}
\]
The composition $q_{x}\circ\Gamma$ is constant on each fiber of $q_{y}$ so there is a unique set function making the diagram commute, namely $\pi\pa{\Gamma}:\br{f}\mapsto\br{\Gamma(f)}$. As $q_{y}$ is a quotient map, the universal property of quotient maps~\cite[Thm.~11.1~p.~139]{jM75} implies that $\pi\pa{\Gamma}$ is continuous. It is well known that $\pi\pa{\Gamma}$ is a bijection~\cite[Thm.~2.1~p.~327]{jM75}. Repeating the above argument with the roles of $x$ and $y$ interchanged and the roles of $\gamma$ and $\gamma^{-1}$ interchanged, we see that $\pi\pa{\Gamma}^{-1}$ is continuous. Thus $\pi\pa{\Gamma}$ is a homeomorphism as desired.
\end{proof}

\section{Translation}\label{translation}

\begin{lemma}\label{leftrightlemma}
Let $\pa{X,x}$ be a pointed topological space. If $\br{f}\in\tfg{X,x}$, then left and right translation by $\br{f}$ are self homeomorphisms of $\tfg{X,x}$.
\end{lemma}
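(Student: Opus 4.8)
The plan is to mimic the basepoint change argument of Section~\ref{basepoint}, realizing left and right translation as the maps induced on the quotient $\tfg{X,x}$ by explicit continuous concatenation maps on the loop space $C_{x}(X)$. Fix a representative $f\in C_{x}(X)$ of $\br{f}$. For left translation I would define
\[
\xymatrix@R=0pt{
	C_{x}(X)	\ar[r]^-{\Lambda}	&		C_{x}(X)\\
	g					\ar@{|-{>}}[r]		&		f\ast g}
\]
Since $f$ and $g$ are loops based at $x$, so is $f\ast g$, and $\br{\Lambda(g)}=\br{f}\cdot\br{g}$. The first step is to verify that $\Lambda$ is continuous for the compact-open topology.

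Continuity is the technical heart, and proceeds exactly as in the basepoint change lemma, only more simply since there is a single concatenation rather than a double one. Writing $I_{1}=\br{0,1/2}$ and $I_{2}=\br{1/2,1}$ with affine homeomorphisms $s_{1}:I_{1}\to I$, $t\mapsto 2t$, and $s_{2}:I_{2}\to I$, $t\mapsto 2t-1$, we have $\Lambda(g)=f\circ s_{1}$ on $I_{1}$ and $\Lambda(g)=g\circ s_{2}$ on $I_{2}$. For a subbasic open set $V=V\pa{K,U}\subset C_{x}(X)$, the condition $\Lambda(g)\in V$ splits into $f\circ s_{1}\pa{K\cap I_{1}}\subset U$, which is independent of $g$, together with $g\circ s_{2}\pa{K\cap I_{2}}\subset U$. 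Hence $\Lambda^{-1}(V)$ is either empty, when the first condition fails, or equals the subbasic open set $V\pa{s_{2}\pa{K\cap I_{2}},U}$, so $\Lambda$ is continuous.

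Next I would pass to the quotient. Because path homotopic loops have path homotopic left concatenations with $f$, the composite $q_{x}\circ\Lambda$ is constant on the fibers of $q_{x}$, so the universal property of quotient maps~\cite[Thm.~11.1~p.~139]{jM75} furnishes a continuous induced map $\pi\pa{\Lambda}:\br{g}\mapsto\br{f}\cdot\br{g}$, which is precisely left translation by $\br{f}$. To see it is a homeomorphism I would repeat the construction with $f^{-1}$ in place of $f$; the resulting continuous map is left translation by $\br{f}^{-1}$, and since $\br{f}\cdot\br{f}^{-1}=\br{e_{x}}=\br{f}^{-1}\cdot\br{f}$ the two compositions are both the identity. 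Thus $\pi\pa{\Lambda}$ is a continuous bijection with continuous inverse.

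Right translation $\br{g}\mapsto\br{g}\cdot\br{f}$ is handled identically using the concatenation $g\mapsto g\ast f$; the same reparameterization gives continuity of the induced map and of its inverse, which is right translation by $\br{f}^{-1}$. The main obstacle is simply carrying out the compact-open bookkeeping of the continuity step correctly, but because $f$ is fixed the preimage of each subbasic set is again subbasic or empty, so no genuine difficulty arises.
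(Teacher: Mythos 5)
Your proposal is correct and follows essentially the same route as the paper's proof: the same concatenation map $g\mapsto f\ast g$ on $C_{x}(X)$, the same subdivision $I_{1}=\br{0,1/2}$, $I_{2}=\br{1/2,1}$ showing the preimage of each subbasic set $V\pa{K,U}$ is either empty or the subbasic set $V\pa{s_{2}\pa{K\cap I_{2}},U}$, the same descent to the quotient via the universal property, and the same trick of invoking the argument for $f^{-1}$ to get continuity of the inverse. No gaps; this matches the paper's argument in both structure and detail.
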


\begin{proof}
Fix $\br{f}\in\tfg{X,x}$ and consider left translation by $\br{f}$ on $\tfg{X,x}$
\[
\xymatrix@R=0pt{
	\tfg{X,x}	\ar[r]^-{L_{\br{f}}}	&		\tfg{X,x}\\
	\br{g}		\ar@{|-{>}}[r]				&		\br{f}\cdot\br{g}}
\]
Plainly $L_{\br{f}}$ is a bijection of sets. Consider the commutative diagram
\begin{equation}\label{leftdiagram}
\xymatrix{
		C_{x}(X)	\ar[r]^-{L_{f}}	\ar[d]_{q}	&	C_{x}(X)	\ar[d]^{q}\\
		\tfg{X,x}	\ar@{-{>}}[r]^-{L_{\br{f}}}	&	\tfg{X,x}}
\end{equation}
where $L_{f}$ is defined by
\[
\xymatrix@R=0pt{
	C_{x}(X)	\ar[r]^-{L_{f}}	&		C_{x}(X)\\
	g					\ar@{|-{>}}[r]	&		f\ast g}
\]
First we show $L_{f}$ is continuous. Let $I_{1}=\br{0,1/2}$ and $I_{2}=\br{1/2,1}$. Define the affine homeomorphisms
\[
\xymatrix@R=0pt{
	I_{1}	\ar[r]^-{s_{1}}		&				I			&	I_{2}	\ar[r]^-{s_{2}}		&		I	\\
	t			\ar@{|-{>}}[r]		&				2t		&	t			\ar@{|-{>}}[r]		&		2t-1}
\]
and note that
\[
\xymatrix@R=0pt{
	I		\ar[r]^-{f\ast g}	&		X\\
	t		\ar@{|-{>}}[r]			&		f\circ s_{1}(t)	&	0\leq t\leq\frac{1}{2}\\
	t		\ar@{|-{>}}[r]			&		g\circ s_{2}(t)	&	\frac{1}{2}\leq t\leq1}
\]
Consider an arbitrary subbasic open set
\[
	V=V\pa{K,U}	\subset C_{x}(X).
\]
Observe that $f\ast g\in V$ if and only if
\begin{align}
	f	&\circ s_{1}\pa{K\cap I_{1}}\subset U	\text{ and}	\label{fs1}\\
	g	&\circ s_{2}\pa{K\cap I_{2}}\subset U.\label{gs1}
\end{align}
Define the subbasic open set
\[
	V'=V\pa{s_{2}\pa{K\cap I_{2}},U}	\subset C_{x}(X).
\]
Observe that $g\in V'$ if and only if~\eqref{gs1} holds. As condition~\eqref{fs1} is independent of $g$, either $L_{f}^{-1}(V)=\varnothing$ or $L_{f}^{-1}(V)=V'$. Thus $L_{f}$ is continuous. The composition $q\circ L_{f}$ is constant on each fiber of the quotient map $q$ and~\eqref{leftdiagram} commutes, so the universal property of quotient maps~\cite[Thm.~11.1~p.~139]{jM75} implies that $L_{\br{f}}$ is continuous.\\

Applying the previous argument to $f^{-1}$ we get $L_{\br{f}}^{-1}=L_{\br{f^{-1}}}$ is continuous and $L_{\br{f}}$ is a homeomorphism. The proof for right translation is almost identical.
\end{proof}

\end{document}